\newtheorem{theorem}{Theorem}
\theoremstyle{remark}
\newtheorem{remark}{Remark}
\newtheorem*{question}{Question}
\renewcommand{\d}{{\mathrm d}}
\begin{document}

\title{A generating function of~the squares of~Legendre~polynomials}

\author{Wadim Zudilin}
\address{School of Mathematical and Physical Sciences, The University of Newcastle, Callaghan, NSW 2308, Australia}
\email{wadim.zudilin@newcastle.edu.au}

\thanks{The author is supported by the Australian Research Council.} %grant DP110104419.

\date{26 November 2012}
%\date{\today}

\subjclass[2010]{Primary 33C20; Secondary 11F03, 11F11, 11Y60, 33C45}
\keywords{$\pi$, Legendre polynomial, generating series, binomial sum, modular function}

\begin{abstract}
We relate a one-parametric generating function for the squares of Legendre polynomials
to an arithmetic hypergeometric series whose parametrisation by a level~7 modular function
was recently given by S.~Cooper. By using this modular parametrisation we resolve
a subfamily of identities involving $1/\pi$ which was experimentally observed by Z.-W.~Sun.
\end{abstract}

\maketitle
%==================================================

In our joint papers \cite{CWZ} with H.\,H.~Chan and J.~Wan and \cite{WZ} with Wan
we made an arithmetic use but also extended the generating functions of Legendre polynomials
$$
P_n(y)={}_2F_1\biggl(\begin{matrix} -n, \, n+1 \\ 1 \end{matrix}\biggm| \frac{1-y}2 \biggr),
$$
originally due to F.~Brafman \cite{Br}. Our generalised generating functions have
the form $\sum_{n=0}^\infty u_nP_n(y)z^n$ where $u_n$ is a so-called
Ap\'ery-like sequence as well as
$$
\sum_{n=0}^\infty{\binom{2n}n}^2P_{2n}(y)z^n \quad\text{and}\quad
\sum_{n=0}^\infty\frac{(3n)!}{n!^3}P_{3n}(y)z^n.
$$
One motivation for the work was a list of formulae for $1/\pi$ given by Z.-W.~Sun~\cite{Sun}. Because
the preprint~\cite{Sun} is a dynamic survey of continuous experimental discoveries by its author,
a few newer examples for $1/\pi$ involving the Legendre polynomials appeared after acceptance of~\cite{CWZ} and~\cite{WZ}.
Namely, the two groups of identities (VI1)--(VI3) and (VII1)--(VII7) related to
the generating functions
\begin{equation}
\sum_{n=0}^\infty P_n(y)^3z^n \quad\text{and}\quad
\sum_{n=0}^\infty\binom{2n}nP_n(y)^2z^n
\label{gen-fun}
\end{equation}
are now given on p.~23 of~\cite{Sun}. A search of existing literature on the subject reveals
no formula which could be useful in proving Sun's observations. The closest-to-wanted identity
is Bailey's
\begin{align}
\sum_{n=0}^\infty P_n(x)P_n(y)z^n
&=\frac1{\bigl(1+z(z-2\sqrt{(1-x^2)(1-y^2)}-2xy)\bigr)^{1/2}}
\nonumber\\ &\qquad\times
{}_2F_1\biggl(\begin{matrix} \frac12, \, \frac12 \\ 1 \end{matrix}\biggm|
\frac{-4\sqrt{(1-x^2)(1-y^2)}z}{1+z(z-2\sqrt{(1-x^2)(1-y^2)}-2xy)}\biggr),
\label{Bailey}
\end{align}
which follows from \cite[Eqs.~(2.1) and (3.1)]{Ba2} and \cite[Eq.~(7) on p.~81]{Ba1}.
The generating function \eqref{Bailey} was rediscovered later by Maximon~\cite{Ma}. It admits,
in fact, a less radical form
\begin{equation}
\sum_{n=0}^\infty P_n(x)P_n(y)z^n
=\frac1{(1-2xyz+z^2)^{1/2}}
\,{}_2F_1\biggl(\begin{matrix} \frac14, \, \frac34 \\ 1 \end{matrix}\biggm|
\frac{4(1-x^2)(1-y^2)z^2}{(1-2xyz+z^2)^2}\biggr)
\label{Wan}
\end{equation}
which is due to Wan~\cite{Wa}.
Unfortunately, no simple generalisation of the result for the terms on the left-hand
side twisted by the central binomial coefficients is known, even in the particular case $x=y$.

With the help of Clausen's identity
$$
P_n(y)^2
={}_3F_2\biggl(\begin{matrix} -n, \, n+1, \, \frac12 \\ 1, \, 1 \end{matrix}\biggm| 1-y^2 \biggr)
=\sum_{k=0}^n\binom nk\binom{n+k}n\binom{2k}k\biggl(-\frac{1-y^2}4\biggr)^k,
$$
we find that the second generating function in~\eqref{gen-fun} is equivalent to
\begin{equation}
\sum_{n=0}^\infty\binom{2n}nz^n\sum_{k=0}^n\binom nk\binom{n+k}n\binom{2k}kx^k.
\label{gen1}
\end{equation}
In view of \cite[Theorem~1]{WZ}, its Clausen-type
specialization~\cite{CTYZ} and our identities \eqref{main1}, \eqref{main2} below,
it is quite likely that the latter generating function can be written
as a product of two arithmetic hypergeometric series, each satisfying a second order
linear differential equation. In this note we only recover the special case
$z=x/(1+x)^2$ of the expected identity, the case which is suggested by
Sun's observations (VII1) and (VII3)--(VII6) from~\cite{Sun}.

\begin{theorem}
\label{th1}
For $v$ from a small neighbourhood of the origin, take
$$
x(v)=\frac{v}{1+5v+8v^2}
\quad\text{and}\quad
z(v)=\frac{x(v)}{(1+x(v))^2}=\frac{v(1+5v+8v^2)}{(1+2v)^2(1+4v)^2}.
$$
Then
\begin{equation}
\sum_{n=0}^\infty\binom{2n}nz(v)^n\sum_{k=0}^n\binom nk\binom{n+k}n\binom{2k}kx(v)^k
=\frac{1+2v}{1+4v}\sum_{n=0}^\infty u_n\biggl(\frac v{(1+4v)^3}\biggr)^n,
\label{main1}
\end{equation}
where the sequence \textup{\cite[A183204]{OEIS}}
\begin{equation*}
u_n=\sum_{k=0}^n{\binom nk}^2\binom{n+k}n\binom{2k}n
=\sum_{k=0}^n(-1)^{n-k}\binom{3n+1}{n-k}{\binom{n+k}{n}}^3
\end{equation*}
satisfies the Ap\'ery-like recurrence equation
\begin{equation*}
\begin{gathered}
(n+1)^3u_{n+1}=(2n+1)(13n^2+13n+4)u_n+3n(3n-1)(3n+1)u_{n-1}
\\
\quad\text{for}\; n=0,1,2,\dots,
\quad u_{-1}=0, \; u_0=1.
\end{gathered}
\end{equation*}
\end{theorem}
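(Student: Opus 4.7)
Write $L(v)$ and $R(v)$ for the left and right sides of \eqref{main1}, both holomorphic at $v=0$. My strategy is to exhibit a common third-order linear ODE over $\mathbb Q(v)$ annihilating $L$ and $R$, and then match three Taylor coefficients at the origin.

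For the right-hand side, the Ap\'ery-like recurrence for $u_n$ is equivalent to a third-order linear differential operator $\mathcal L_t$ with polynomial coefficients annihilating $G(t)=\sum_{n\ge0}u_nt^n$. Since $t(v)=v/(1+4v)^3$ is algebraic in $v$ and $(1+2v)/(1+4v)$ is rational, pulling $\mathcal L_t$ back through these two operations produces a third-order operator $\mathcal L_v^R\in\mathbb Q(v)[\d/\d v]$ annihilating $R(v)$. For the left-hand side, the Clausen-type identity displayed just before the theorem gives
\[
L(v)=\sum_{n=0}^\infty\binom{2n}{n}P_n(y(v))^2\,z(v)^n,
\qquad y(v)^2=1+4x(v)=\frac{(1+v)(1+8v)}{1+5v+8v^2}.
\]
I would then apply the creative-telescoping algorithm to the bivariate summand $\binom{2n}{n}\binom{n}{k}\binom{n+k}{n}\binom{2k}{k}x(v)^kz(v)^n$ viewed as a function of $(n,k)$ with parameter $v$, producing a third-order linear operator $\mathcal L_v^L$ that annihilates $L(v)$.

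Once $\mathcal L_v^L$ and $\mathcal L_v^R$ are shown to coincide (up to a rational scalar) by a finite symbolic comparison of coefficients, the theorem reduces to matching $L^{(j)}(0)=R^{(j)}(0)$ for $j=0,1,2$; a routine Taylor expansion gives $L(v)=R(v)=1+2v+O(v^2)$, and the third coefficient is checked similarly.

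The hardest step is the derivation of $\mathcal L_v^L$: the presence of two algebraic substitutions $x(v)$ and $z(v)$ interacting with a double sum inflates the output of creative telescoping, and the subsequent equality check against $\mathcal L_v^R$ is computationally heavy. A potentially cleaner alternative is to exploit Cooper's level-$7$ modular parametrization of $G(t)$ (mentioned in the abstract): identify the algebraic variable $v$ itself as a suitable Hauptmodul on a congruence subgroup, so that both sides become the same weight-one modular form. This conceptual route has the further virtue that it is precisely the framework needed afterwards to resolve Sun's $1/\pi$ identities, so Cooper's parametrization would come essentially for free. In either approach, no ingredient beyond those already cited in the introduction appears to be required; the difficulty is algebraic/computational rather than theoretical.
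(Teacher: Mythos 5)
Your plan is essentially the paper's own proof: the paper rewrites \eqref{main1} slightly and then verifies (by a routine, creative-telescoping-style computation) that both sides are annihilated by one explicit third-order linear differential operator in $v$, after which comparison of the first few Taylor coefficients at $v=0$ finishes the argument. The modular-parametrisation route you sketch is only used later in the paper for the $1/\pi$ applications, not for proving \eqref{main1} itself.
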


Because $y^2=1+4x$ for the second generating function in~\eqref{gen-fun},
the equivalent form of~\eqref{main1} is the identity
\begin{multline*}
\sum_{n=0}^\infty\binom{2n}nP_n\biggl(\frac{\sqrt{(1+v)(1+8v)}}{\sqrt{1+5v+8v^2}}\biggr)^2\biggl(\frac{v(1+5v+8v^2)}{(1+2v)^2(1+4v)^2}\biggr)^n
\\
=\frac{1+2v}{1+4v}\sum_{n=0}^\infty u_n\biggl(\frac v{(1+4v)^3}\biggr)^n,
\end{multline*}

\begin{remark}
\label{rem1}
S.~Cooper constructs in~\cite[Theorem~3.1]{Co} a modular parametrisation of the generating
function $\sum_{n=0}^\infty u_nw^n$. Namely, he proves that the substitution
\begin{equation}
w(\tau)=\frac{\eta(\tau)^4\eta(7\tau)^4}{\eta(\tau)^8+13\eta(\tau)^4\eta(7\tau)^4+49\eta(7\tau)^8}
\label{mod-par}
\end{equation}
translates the function into the Eisenstein series
$(7E_2(7\tau)-E_2(\tau))/6$. Here $\eta(\tau)=q^{1/24}\prod_{m=1}^\infty(1-q^m)$
is Dedekind's eta function, $q=e^{2\pi i\tau}$, and
$$
E_2(\tau)=\frac{12}{\pi i}\,\frac{\d\log\eta}{\d\tau}=1-24\sum_{n=1}^\infty\frac{q^n}{1-q^n}.
$$
Using this, Cooper derives a general family \cite[Eqs.~(37),~(39)]{Co}
of the related Ramanu\-jan-type identities for $1/\pi$. It is this result
and the `translation' method \cite{Zu} which allow us to prove Sun's observations
(VII1) and (VII3)--(VII6) from~\cite{Sun}.
% v=\frac{\eta_2^3 \eta_{14}^3}{\eta_1^3 \eta_7^3} ???
Note that this modular parametrisation and results of Chan and Cooper \cite[Lemmas 4.1 and 4.3]{CC1}
lead to the following hypergeometric forms of the generating function:
\begin{align*}
&
\frac1{\sqrt{1+13h+49h^2}}\sum_{n=0}^\infty u_n\biggl(\frac h{1+13h+49h^2}\biggr)^n
\\ &\quad
=\frac1{\sqrt{1+245h+2401h^2}}
\,{}_3F_2\biggl(\begin{matrix} \frac16, \, \frac12, \, \frac56 \\ 1, \, 1 \end{matrix}\biggm|
\frac{1728h}{(1+13h+49h^2)(1+245h+2401h^2)^3}\biggr)
\\ &\quad
=\frac1{\sqrt{1+5h+h^2}}
\,{}_3F_2\biggl(\begin{matrix} \frac16, \, \frac12, \, \frac56 \\ 1, \, 1 \end{matrix}\biggm|
\frac{1728h^7}{(1+13h+49h^2)(1+5h+h^2)^3}\biggr)
\end{align*}
which are valid near $h=0$.
\end{remark}

\begin{theorem}[Satellite identity]
\label{th2}
The identity
\begin{align}
&
\sum_{n=0}^\infty\binom{2n}n\biggl(\frac x{(1+x)^2}\biggr)^n\sum_{k=0}^n\binom nk\binom{n+k}n\binom{2k}kx^k
\nonumber\\ &\qquad\times
\bigl(2x(3+4x)-n(1-x)(3+5x)+4k(1+x)(1+4x)\bigr)
=0
\label{main2}
\end{align}
is valid whenever the left-hand side makes sense.
\end{theorem}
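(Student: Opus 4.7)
Introduce
$$T_{n,k}(x):=\binom{2n}{n}\binom{n}{k}\binom{n+k}{n}\binom{2k}{k}\frac{x^{n+k}}{(1+x)^{2n}}$$
and write the left-hand side of~\eqref{main2} as $2x(3+4x)A(x)-(1-x)(3+5x)B(x)+4(1+x)(1+4x)C(x)$, where $A(x):=\sum_{n,k}T_{n,k}(x)$, $B(x):=\sum_{n,k}nT_{n,k}(x)$, $C(x):=\sum_{n,k}kT_{n,k}(x)$. The logarithmic derivative $x\partial_x\log T_{n,k}=[n(1-x)+k(1+x)]/(1+x)$ yields the tautology $(1+x)xA'(x)=(1-x)B(x)+(1+x)C(x)$, which lets me eliminate $C$ and reduce~\eqref{main2} to
$$7(1-x)(1+3x)B(x)=2x(3+4x)A(x)+4x(1+x)(1+4x)A'(x).$$
Thus the task is to identify $B$ as an explicit combination of $A$ and $A'$.

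For this I would use Theorem~\ref{th1}. It gives $A(x(v))=(1+2v)R(v)/(1+4v)$ with $R(v):=\sum_n u_n(v/(1+4v)^3)^n$, so by the chain rule---using the Jacobian $x'(v)=(1-8v^2)/(1+5v+8v^2)^2$---the derivative $A'(x(v))$ becomes an explicit $\mathbb{Q}(v)$-linear combination of $R(v)$ and $R'(v)$. For $B(x)$ I would think of $z$ as an independent variable and observe that $B(x)=(z\partial_z\mathcal{F})(x,z)\big|_{z=x/(1+x)^2}$, where $\mathcal{F}(x,z):=\sum_{n,k}\binom{2n}{n}\binom{n}{k}\binom{n+k}{n}\binom{2k}{k}x^kz^n$; differentiating~\eqref{main1} in $v$ and combining with the tautology above produces $B(x(v))$ as another $\mathbb{Q}(v)$-linear combination of $R(v)$ and $R'(v)$.

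Substituting these formulae into the reduced identity converts the claim into a single linear relation $P(v)R(v)+Q(v)R'(v)\equiv0$ with explicit $P,Q\in\mathbb{Q}(v)$, which I would verify from the first-order differential equation that $R(v)$ inherits from the Ap\'ery-like recurrence via the dictionary $w\,\d/\d w\leftrightarrow n$ followed by the substitution $w=v/(1+4v)^3$. The main obstacle will be the book-keeping: the substitution $x\mapsto x(v)$ produces denominators $1+5v+8v^2$, $(1+2v)(1+4v)$ and the Jacobian $1-8v^2$, whose mutual cancellation must be tracked before the reduced identity collapses.

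As an automatic alternative, which I would run in parallel, \eqref{main2} can also be attacked by double-variable creative telescoping: one seeks rational certificates $\phi(n,k,x),\psi(n,k,x)$ with $T_{n,k}L(n,k,x)=\Delta_n(\phi T_{n,k})+\Delta_k(\psi T_{n,k})$, where $L(n,k,x)$ denotes the bracket in~\eqref{main2}, after which summation over $n,k\ge0$ telescopes to zero with no boundary contribution. A practical sanity check, before either heavy computation, is to expand both sides of~\eqref{main2} as power series in $x$ to high order and confirm agreement; by holonomicity of the sums involved, this already pins the identity down to a finite algebraic verification.
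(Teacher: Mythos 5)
Your reduction is fine as far as it goes: the logarithmic-derivative tautology $(1+x)xA'=(1-x)B+(1+x)C$ is correct, and eliminating $C$ does turn \eqref{main2} into $7(1-x)(1+3x)B=2x(3+4x)A+4x(1+x)(1+4x)A'$. The gap is the next step, where you claim that $B(x(v))$ can be extracted from Theorem~\ref{th1}. Identity \eqref{main1} only records the values of $\mathcal{F}(x,z)$ on the one-dimensional curve $z=x/(1+x)^2$, and its $v$-derivative is exactly $A'(x(v))\,x'(v)$, the directional derivative along that same curve --- the same data you already have from the chain rule. Combined with the tautology this produces a single relation in which $B=z\partial_z\mathcal{F}$ and $C=x\partial_x\mathcal{F}$ enter jointly (indeed it \emph{is} the tautology), and no further differentiation of \eqref{main1} along the curve can separate them. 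Expressing $B$ through $A$ and $A'$ in the required form is, modulo the tautology, precisely equivalent to \eqref{main2} itself, so this route is circular. This is also visible in the paper's later argument: when deriving the $1/\pi$ formulae it combines \eqref{main1} and its $v$-derivative with \eqref{main2} \emph{as an independent input} ``to eliminate the linear term in $k$''; if $B$ were recoverable from \eqref{main1} alone, that step would be unnecessary. A further slip: the three-term Ap\'ery-like recurrence gives a \emph{third}-order differential equation for $R(w)=\sum_n u_nw^n$, not a first-order one, so there is no first-order equation with which to ``verify'' a relation $P(v)R+Q(v)R'\equiv0$; such a relation could only be established by showing $P\equiv Q\equiv0$ identically, which again presupposes correct closed forms for $B$ that you do not have.

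Your fallback is sound in principle and is essentially what the paper does: its proof of Theorem~\ref{th2} is the same ``routine'' verification as for Theorem~\ref{th1}, i.e.\ exhibiting an operator (equivalently, telescoping certificates) that annihilates the left-hand side of \eqref{main2} and checking initial terms. But as written it is a plan, not a proof: you supply no certificates $\phi,\psi$, no annihilating operator, and no explicit order/degree bound that would convert the high-order power-series check into a rigorous finite verification. To repair the argument, drop the Theorem~\ref{th1}-based route and actually carry out the creative-telescoping (or operator) computation for the kernel $T_{n,k}$ times the bracket in \eqref{main2}.
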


\begin{proof}[Proof of Theorems~\textup{\ref{th1}} and~\textup{\ref{th2}}]
The identity~\eqref{main1} is equivalent to
\begin{multline*}
\sum_{n=0}^\infty\binom{2n}n\frac{v^n(1+5v+8v^2)^n}{(1+2v)^{2n+1}(1+4v)^{2n+1}}
\sum_{k=0}^n\binom nk\binom{n+k}n\binom{2k}k\frac{v^k}{(1+5v+8v^2)^k}
\\
=\sum_{n=0}^\infty u_n\frac{v^n}{(1+4v)^{3n+2}}.
%\label{main2}
\end{multline*}
It is routine to verify that the both sides are annihilated by the differential operator
\begin{align*}
&
v^2(1+v)(1+8v)(1+5v+8v^2)\frac{\d^3}{\d v^3}
+3v(1+21v+122v^2+280v^3+192v^4)\frac{\d^2}{\d v^2}
\\ &\qquad
+(1+50v+454v^2+1408v^3+1216v^4)\frac{\d}{\d v}
+4(1+22v+108v^2+128v^3),
\end{align*}
and the proof of Theorem~\ref{th1} follows. A similar routine shows the vanishing
in Theorem~\ref{th2}.
\end{proof}

\begin{table}[h]
\begin{center}
\begin{tabular}{|c|c|c|c|c|c|p{6 in}|}
\hline
\# in~\cite{Sun} & $x$ & $z$ & $v$ & $w=v/(1+4v)^3$ & $\tau \vphantom{\big|^1}$ \\
\hline
(VII1) & $-\frac1{14}$ & $\frac{14}{225}$ & $1$ & $\frac1{5^3}$ & $\frac{2i}{\sqrt7} \vphantom{\big|^1}$ \\[2mm]
(VII2) & $\frac9{20}$ & $-\frac5{196}$ & & & \\[2mm]
(VII3) & $-\frac1{21}$ & $\frac{21}{484}$ & $1+\frac{\sqrt{14}}4$ & $\frac{188-42\sqrt{14}}{22^3}$ & $\frac{i\sqrt{6}}{\sqrt7}$ \\[2mm]
(VII4) & $-\frac1{45}$ & $\frac{45}{2116}$ & $\frac52+\frac{7\sqrt2}4$ & $\bigl(\frac{8-3\sqrt2}{46}\bigr)^3$ & $\frac{i\sqrt{10}}{\sqrt7}$ \\[2mm]
(VII5) & $\frac17$ & $-\frac7{36}$ & $-\frac34-\frac{\sqrt7}4$ & $\frac{-34+14\sqrt7}{6^3}$ & $\frac{i\sqrt{3}}{\sqrt7}$ \\[2mm]
(VII6) & $\frac1{175}$ & $-\frac{175}{30276}$ & $-\frac{45}4-\frac{17\sqrt7}4$ & $\bigl(\frac{-13+7\sqrt7}{174}\bigr)^3$ & $\frac{i\sqrt{19}}{\sqrt7}$ \\[2mm]
(VII7) & $-\frac{576}{3025}$ & $\frac{3025}{188356}$ & & & \\[2mm]
\hline
\end{tabular}
\end{center}
\bigskip
\caption{The choice of parameters for Sun's observations in \cite[p.~23]{Sun}.
The last column corresponds to the choice of $\tau$ such that $w(\tau)=v/(1+4v)^3$
for the modular function $w(\tau)$ defined in~\eqref{mod-par}}
\label{table1}
\end{table}

In Table~\ref{table1} we list the relevant parametrisations of Sun's formulae from~\cite{Sun}.
The last column corresponds to the choice of $\tau$ in~\eqref{mod-par} such that
$v/(1+4v)^3=w(\tau)$ there. The general formulae for $1/\pi$ in these cases,
\begin{equation}
\sum_{n=0}^\infty(a+bn)u_nw^n=\frac1{\pi\sqrt7},
\label{pi-given}
\end{equation}
are given by Cooper in~\cite[Eq.~(37)]{Co}. On using \eqref{main1} and its $v$-derivative
\begin{multline*}
\begin{aligned}
&
\sum_{n=0}^\infty\binom{2n}nz(v)^n\sum_{k=0}^n\binom nk\binom{n+k}n\binom{2k}kx(v)^k
\\ &\qquad\times
\biggl(n\,\frac{(1-8v^2)(1+4v+8v^2)}{v(1+2v)(1+4v)(1+5v+8v^2)}+k\,\frac{1-8v^2}{v(1+5v+8v^2)}\biggr)
\end{aligned}
\\
=\frac{1+2v}{1+4v}\sum_{n=0}^\infty u_n\frac{v^n}{(1+4v)^{3n}}
\biggl(n\,\frac{1-8v}{v(1+4v)}-\frac2{(1+2v)(1+4v)}\biggr),
\end{multline*}
the equalities~\eqref{pi-given} together with the related specialisations of~\eqref{main2}
(to eliminate the linear term in~$k$)
imply Sun's identities (VII1), (VII3)--(VII6) by translation~\cite{Zu}.

\medskip
Note that Cooper's \cite[Table~1]{Co} involves two more examples corresponding to the choices
$-1/4^3$ and $-1/22^3$ for $v/(1+4v)^3$; the values of $x$ and $z$ in these cases are
zeroes of certain irreducible cubic polynomials though. There are also several examples
when $x$ and $z$ are taken from a quadratic field. For instance, taking $\tau=\frac{i\sqrt{11}}{\sqrt7}$
one gets
$$
x=\frac{23-8\sqrt{11}}{175} \quad\text{and}\quad z=\frac{83-32\sqrt{11}}{1100}
$$
in~\eqref{main1} and~\eqref{main2}; the corresponding $v_1=-6.798\dots$ and $v_2=-0.018\dots$ solve
the quartic equation $64v^4+448v^3+96v^2+56v+1=0$. As such identities are only of
theoretical importance, we do not derive them here.

\medskip
It is apparent that there is a variety of formulae similar to~\eqref{main1}
and~\eqref{main2} designed for generating functions of other polynomials. For example, Sun's list
contains five identities involving values of the polynomials
$$
A_n(x)=\sum_{k=0}^n{\binom nk}^2\binom{n+k}nx^k, \quad n=0,1,2,\dotsc.
$$
By examining the entries (2.1)--(2.3) on p.~3 of~\cite{Sun} one notices that
the parameters $x$ and $z$ of the generating function
\begin{equation}
\sum_{n=0}^\infty\binom{2n}nA_n(x)z^n
\label{gen2}
\end{equation}
are related by $z=x/(1-4x)$, while the entries (6.1) and (6.2) on p.~15 there
correspond to the relation $z=1/(x+1)^2$. With some work we find that those
specialisations indeed lead to third order arithmetic linear differential equations
which can be then identified with the known examples \cite{CC2,CZ}:
\begin{multline*}
\sum_{n=0}^\infty\binom{2n}n\frac{v^n(1-v)^n(1-4v)^n}{(1-2v+4v^2)^{2n+1}}
\sum_{k=0}^n{\binom nk}^2\binom{n+k}n\frac{v^k(1-v)^k(1-4v)^k}{(1-4v^2)^{2k+1}}
\\
\begin{aligned}
&=\sum_{n=0}^\infty\sum_{k=0}^n{\binom nk}^2{\binom{n+k}n}^2
\frac{v^n(1-2v)^n(1-4v)^{2n}}{(1-v)^{n+1}(1+2v)^{n+1}}
\\
&=\sum_{n=0}^\infty\sum_{k=0}^n{\binom nk}^2\binom{2k}k\binom{2n-2k}{n-k}
\frac{(-1)^nv^n(1-v)^n(1-4v^2)^n}{(1-4v)^{2n+2}}
\\
&=\sum_{n=0}^\infty\frac{(3n)!}{n!^3}\binom{2n}n
\frac{v^n(1-v)^n(1-4v^2)^n(1-4v)^{4n}}{(1+4v-8v^2)^{2n+2}}
\end{aligned}
\end{multline*}
and
\begin{multline*}
\sum_{n=0}^\infty\binom{2n}n\frac{v^{2n}}{(1+10v+27v^2)^{2n+1}}
\sum_{k=0}^n{\binom nk}^2\binom{n+k}n\frac{(1+9v+27v^2)^k}{v^k}
\\
=\sum_{n=0}^\infty\frac{(3n)!}{n!^3}\binom{2n}n\frac{v^n(1+9v+27v^2)^n}{(1+9v)^{6n+2}},
\end{multline*}
respectively. Additionally, there are satellite identities for each of
the specialisations, both similar to~\eqref{main2}. These identities,
the known Ramanujan-type formulae for the right-hand sides and the translation
technique can be then used to prove Sun's observations.

On the other hand, as already mentioned at the beginning, it is natural
to expect the existence of Bailey--Brafman-like identities~\cite{CWZ,WZ}
for the two-variate generating functions \eqref{gen1},~\eqref{gen2}.

\begin{question}
Given an (arithmetic) generating function $\sum_{n=0}^\infty A_nz^n$
which satisfies a second order linear differential equation (with regular
singularities), is it true that $\sum_{n=0}^\infty\binom{2n}nA_nz^n$
can be written as the product of two arithmetic series, each satisfying
(its own) second order linear differential equation?
\end{question}

Here, of course, we allow $A_n$ depend on some other parameters;
the example of such a product decomposition for $A_n=A_n(x)=\sum_k{\binom nk}^2\binom{2k}nx^n$
is given recently by M.~Rogers and A.~Straub \cite[Theorem~2.3]{RS}.
An affirmative answer to the question would give one an arithmetic
parametrisation of the generating function $\sum_{n=0}^\infty\binom{2n}nP_n(x)P_n(y)z^n$
(cf.~\eqref{Bailey} or~\eqref{Wan}).

Note that there are some other generating functions in~\cite{Sun},
like the first one in~\eqref{gen-fun}, which are not of the form
$\sum_{n=0}^\infty\binom{2n}nA_nz^n$. We believe however that they
can be reduced to the latter form by a suitable algebraic transformation.

\medskip
\textbf{Acknowledgments.} I would like to thank Heng Huat Chan, Jes\'us Guillera and James Wan for our fruitful conversations
on the subject, and Shaun Cooper for useful comments and for making me familiar with his work~\cite{Co} already
at the time of its preparation.


\begin{thebibliography}{99}

\bibitem{Ba1}
\textsc{W.\,N.~Bailey},
\emph{Generalized hypergeometric series},
Cambridge Math. Tracts \textbf{32}
Cambridge Univ. Press, Cambridge (1935);
2nd reprinted edition,
Stechert-Hafner, New York--London (1964).

\bibitem{Ba2}
\textsc{W.\,N.~Bailey},
The generating function of Jacobi polynomials,
\emph{J. London Math. Soc.} \textbf{13} (1938), 8--12.

\bibitem{Br}
\textsc{F.~Brafman},
Generating functions of Jacobi and related polynomials,
\emph{Proc. Amer. Math. Soc.} \textbf{2} (1951), 942--949.

\bibitem{CC1}
\textsc{H.\,H.~Chan} and \textsc{S.~Cooper},
Eisenstein series and theta functions to the septic base,
\emph{J. Number Theory} \textbf{128}:3 (2008), 680--699.

\bibitem{CC2}
\textsc{H.\,H.~Chan} and \textsc{S.~Cooper},
Rational analogues of Ramanujan's series for $1/\pi$,
\emph{Math. Proc. Cambridge Philos. Soc.} (2012), 23~pp.; DOI 10.1017/S0305004112000254.

\bibitem{CTYZ}
\textsc{H.\,H.~Chan}, \textsc{Y.~Tanigawa}, \textsc{Y.~Yang} and \textsc{W.~Zudilin},
New analogues of Clausen's identities arising from the theory of modular forms,
\emph{Adv. in Math.} \textbf{228} (2011), 1294--1314.

\bibitem{CWZ}
\textsc{H.\,H.~Chan}, \textsc{J.~Wan} and \textsc{W.~Zudilin},
Legendre polynomials and Ramanujan-type series for $1/\pi$,
\emph{Israel J.\ Math.} (2012), 25~pp.; DOI 10.1007/s11856-012-0081-5.

\bibitem{CZ}
\textsc{H.\,H.~Chan} and \textsc{W.~Zudilin},
New representations for Ap\'ery-like sequences,
\emph{Mathematika} \textbf{56} (2010), 107--117.

\bibitem{Co}
\textsc{S.~Cooper},
Sporadic sequences, modular forms and new series for $1/\pi$,
\emph{Ramanujan J.} \textbf{29} (2012), 163--183.

\bibitem{Ma}
\textsc{L.\,C.~Maximon},
A generating function for the product of two Legendre polynomials,
\emph{Norske Vid. Selsk. Forh. Trondheim} \textbf{29} (1956), 82--86.

\bibitem{RS}
\textsc{M.~Rogers} and \textsc{A.~Straub},
A solution of Sun's \$520 challenge concerning $520/\pi$,
\emph{Preprint} \texttt{arXiv:\,1210.2373 [math.NT]} (2012), 17~pp.

\bibitem{OEIS}
\textsc{N.\,J.\,A.~Sloane},
\emph{The On-Line Encyclopedia of Integer Sequences},
published electronically at \texttt{http://www.research.att.com/\~{}njas/sequences/} (2012).

\bibitem{Sun}
\textsc{Z.-W.~Sun},
List of conjectural series for powers of $\pi$ and other constants,
\emph{Preprint} \texttt{arXiv:\,1102.5649v37 [math.CA]} (January 24, 2012), 27~pp.

\bibitem{Wa}
\textsc{J.~Wan},
Private communication (September 27, 2012).

\bibitem{WZ}
\textsc{J.~Wan} and \textsc{W.~Zudilin},
Generating functions of Legendre polynomials: a tribute to Fred Brafman,
\emph{J. Approximation Theory} \textbf{164} (2012), 488--503.

\bibitem{Zu}
\textsc{W.~Zudilin},
Lost in translation,
in: \emph{Wilf memorial volume},
I.~Kotsireas and E.\,V.~Zima (eds.)
(Springer, to appear), 6~pp.;
\emph{Preprint} \texttt{arXiv:\,1210.0269 [math.NT]} (2012).

\end{thebibliography}
\end{document}